\newtheorem{theorem}{Theorem}
\renewcommand{\baselinestretch}{1.25}
\begin{document}

\title{Folding = Colouring}
\author{David R. Wood}
\thanks{Departament de Matem{\`a}tica Aplicada II, Universitat Polit{\`e}cnica de Catalunya, Barcelona, Spain (\texttt{david.wood@upc.es}). Research supported by a Marie Curie Fellowship of the European Community under contract MEIF-CT-2006-023865, and by the projects MEC MTM2006-01267 and DURSI 2005SGR00692.}

\date{\today}
\subjclass{05C15 (coloring of graphs and hypergraphs)}

\keywords{graph, folding, folding number, colouring, coloring, chromatic number, Hadwiger's conjecture}
\maketitle

The foldings of a graph\footnote{All graphs considered are finite, undirected, loopless, connected, and possibly have parallel edges. A graph is \emph{complete} if there is at least one edge between every pair of vertices.} are defined as follows. First, a graph $G$ is a \emph{folding} of itself. Now suppose that $G$ is not complete. Let $G'$ be a graph obtained from $G$ by identifying two vertices at distance $2$ in $G$. Then every folding of $G'$ is a \emph{folding} of $G$. The \emph{folding number} of $G$, denoted by $f(G)$, is the minimum order of a complete folding of $G$. These concepts were introduced by \citet{GGR-GC02}, who determined the folding number of wheels and fans. Here we prove the following more general theorem, where $\chi(G)$ is the chromatic number of $G$.

\begin{theorem}
For every graph $G$, $$f(G)=\chi(G).$$
\end{theorem}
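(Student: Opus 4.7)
The plan is to establish the two inequalities $f(G) \ge \chi(G)$ and $f(G) \le \chi(G)$ separately. For the lower bound, a single folding step identifying two vertices $u, v$ at distance $2$ is a surjective graph homomorphism $G \to G'$, well-defined because $u, v$ are non-adjacent so no edge becomes a loop; hence $\chi(G) \le \chi(G')$ for every folding $G'$ of $G$. A complete graph on $n$ vertices, even with parallel edges, has chromatic number $n$, so the minimum order of a complete folding satisfies $f(G) \ge \chi(G)$.

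For the upper bound I would use induction on $|V(G)|$. If $G$ is complete, then $f(G) = |V(G)| = \chi(G)$ trivially. Otherwise the inductive step reduces to the following lemma: \emph{if $G$ is connected and non-complete with $\chi(G) = k$, then there exist vertices $u, v$ at distance $2$ in $G$ and a proper $k$-colouring $c$ of $G$ with $c(u) = c(v)$.} Given the lemma, folding $u$ and $v$ produces a graph $G'$ with $|V(G')| < |V(G)|$ to which the colouring $c$ descends, so $\chi(G') = k$; induction then yields $f(G) \le f(G') \le \chi(G') = k = \chi(G)$.

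I expect the lemma to be the main obstacle, and I would prove it by a Kempe chain argument. Suppose for contradiction that in every proper $k$-colouring of $G$, every pair of vertices at distance $2$ receives different colours. Fix such a colouring $c$ and a distance-$2$ pair $u, v$ (one exists because $G$ is connected and non-complete); set $a = c(u)$ and $b = c(v)$, distinct by hypothesis. Let $K$ be the connected component of $u$ in the subgraph of $G$ induced by the vertices of colour $a$ or $b$. If $v \notin K$, then swapping $a$ and $b$ on $K$ produces a proper $k$-colouring $c'$ with $c'(u) = b = c'(v)$, contradicting the hypothesis. Hence $v \in K$, and any $u$-to-$v$ path $u = x_0, x_1, \ldots, x_\ell = v$ in $K$ alternates between colours $a$ and $b$; so $\ell$ is odd, and since $u, v$ are non-adjacent $\ell \ge 3$. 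Then $x_0$ and $x_2$ are distinct, both of colour $a$, non-adjacent (since $c$ is proper), and joined by the length-$2$ path $x_0 x_1 x_2$; hence they are at distance exactly $2$ and identically coloured under $c$, contradicting the hypothesis applied to $c$.

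The routine bookkeeping aside, the heart of the argument is this final Kempe step: either the swap directly equates the colours of $u$ and $v$, or the alternating path inside $K$ forces a shorter monochromatic distance-$2$ pair. The lower bound $\ell \ge 3$ is precisely what makes $x_2$ exist and differ from $x_0$, and this is the delicate part of the proof.
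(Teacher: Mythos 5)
Your proof is correct, and its overall skeleton matches the paper's: the same two inequalities, the same induction for the upper bound, and the same key claim that some optimal colouring assigns equal colours to a pair of vertices at distance~$2$. Where you genuinely diverge is in how that claim is proved. The paper invokes Brooks' theorem: setting aside odd cycles (handled ad hoc) and complete graphs, it takes a colouring with at most $\Delta(G)$ colours and applies pigeonhole to the neighbourhood of a maximum-degree vertex to find two like-coloured neighbours at distance~$2$. You instead run a Kempe-chain argument on an arbitrary $\chi(G)$-colouring: either swapping the two colours on the chain containing $u$ merges the colours of the chosen distance-$2$ pair, or the alternating path inside the chain already exhibits a monochromatic distance-$2$ pair $x_0,x_2$ (your observation that the path has odd length at least $3$ is exactly what is needed, and it is correct). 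Your route is self-contained --- no Brooks, no odd-cycle case split --- and it works directly with $\chi(G)$-colourings, which sidesteps a small wrinkle in the paper's version, where the colouring produced is a priori only a $\Delta(G)$-colouring and one must note that an optimal colouring also uses at most $\Delta(G)$ colours before concluding $\chi(G')\leq\chi(G)$. The paper's route is shorter if Brooks' theorem is taken as a black box. Your lower bound via composing the folding homomorphism with a colouring of the complete folding is also a touch slicker than the paper's induction, though the content is identical.
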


\begin{proof}
First we prove that $\chi(G)\leq f(G)$. We proceed by induction on $|V(G)|$. In the base case, if $G$ is complete, then $f(G)=|V(G)|=\chi(G)$, and we are done. Otherwise, by the definition of folding-number, there is a graph $G'$ obtained from $G$ by identifying two vertices $v$ and $w$ at distance $2$ in $G$, such that $f(G')=f(G)$. By induction, $\chi(G')\leq f(G')$. If $v$ and $w$ are identified into a vertex $x$, then the colour assigned to $x$ in a colouring of $G'$ can be assigned to both $v$ and $w$ (since $v$ and $w$ are not adjacent, and every neighbour of $v$ or $w$ in $G$ is a neighbour of $x$ in $G'$). Thus $\chi(G)\leq\chi(G')\leq f(G')=f(G)$. 

We now prove that $f(G)\leq\chi(G)$, again by induction on $|V(G)|$. In the base case, $G$ is complete and $f(G)=|V(G)|=\chi(G)$. Now assume $G$ is not complete. 

We claim that $G$ has a $\chi(G)$-colouring such that some pair of vertices $v$ and $w$ at distance $2$ receive the same colour. First suppose that $G$ is an odd cycle. Then $\chi(G)=3$. Colour one vertex `red', and colour the remaining vertices `blue' and `green' alternately around the cycle. This defines the claimed colouring. Now assume that $G$ is not an odd cycle. Take any $\Delta(G)$-colouring of $G$. We have $\chi(G)\leq\Delta(G)$ by Brooks' Theorem \citep{Brooks41}, where $\Delta(G)$ is the maximum degree of $G$. Let $u$ be a vertex of degree $\Delta(G)$. Two neighbours $v$ and $w$ of $u$ receive the same colour, as otherwise we have $\Delta(G)+1$ colours. Now $v$ and $w$ are not adjacent since they receive the same colour. Thus $v$ and $w$ are at distance $2$. This proves the claim.

Let $G'$ be the graph obtained from $G$ by identifying $v$ and $w$. Since the $\chi(G)$-colouring of $G$ defines a colouring of $G'$, it follows that $\chi(G')\leq\chi(G)$. By induction, $f(G')\leq\chi(G')$. Since $G'$ is a folding of $G$, it follows that $f(G)\leq f(G')$. Therefore $f(G)\leq f(G')\leq \chi(G')\leq \chi(G)$.
\end{proof}

Folding is similar to edge-contraction (that is, identify two adjacent vertices, and delete the resulting loop). Perhaps this is relevant for Hadwiger's Conjecture which, in light of Theorem~1, states that the minimum order of a complete folding of a graph $G$ is at most the maximum order of a complete contraction of $G$.

Theorem~1 suggests the following algorithm for colouring a graph $G$: \\
\hspace*{6mm}\emph{While $G$ is not complete, identify two vertices at distance $2$, and repeat.}\\
First observe that this algorithm $2$-colours every bipartite graph. Theorem~1 implies that there exists a choice of vertices to identify at each step so that this algorithm optimally colours the given graph\footnote{A similar property holds for the greedy sequential colouring algorithm: Every graph admits a linear ordering of its vertices, so that if the vertices are greedily coloured in this order, then an optimal colouring is obtained \citep{Biggs}.}. Of course, choosing the right vertices is difficult\footnote{Indeed, \citet{GGR-GC02} showed that the complete graph on approximately $\sqrt{n/2}$ vertices is a folding of the wheel graph on $n$ vertices.}. A natural heuristic is to choose a pair of vertices with the maximum number of common neighbours. Then the maximum possible number of parallel edges are created when the chosen vertices are identified. But this heuristic says nothing for graphs with girth at least $5$.

\subsection*{Note: }

Since this paper was first released, I have discovered that graph foldings were actually introduced by \citet{CookEvans} in 1979, who proved Theorem~1 with an identical proof to that presented here. Foldings have since been studied by a number of authors \citep{BS81,Evans-JGT86,GaoHahn-DM95,BS-CN81}.


\end{document}